\theoremstyle{plain}
\newtheorem{theorem}{Theorem}[section]
\newtheorem{lemma}[theorem]{Lemma}
\newtheorem{corollary}[theorem]{Corollary}
\newtheorem{example}[theorem]{Example}
\theoremstyle{definition}
\newtheorem{definition}{Definition}
\theoremstyle{remark}
\newtheorem{remark}{Remark}
\DeclarePairedDelimiter\braces\{\}
\newcommand{\Set}{\braces*}
\title{Graphical C(3)-T(6) implies CAT(0)
}
\author{Huaitao Gui}
\date{}
\begin{document}

\maketitle

\begin{abstract}
Graphical small cancellation extends the classical small cancellation theory and provides a powerful method for constructing groups with interesting features. In the classical setting, $C(3)$-$T(6)$ small cancellation complexes are known to admit locally CAT(0) metrics. In this paper, we construct locally CAT(0) metrics for graphical $C(3)$-$T(6)$ complexes.
\end{abstract}

\section{Introduction}

Graphical small cancellation is a generalization of the classical small cancellation theory. It provides a method for constructing groups whose Cayley graphs contain certain prescribed graphs, giving rise to groups with interesting properties; see, for example, \cite{Gro-random, OW-Kazhdan, Osajda20, Coulon-Gruber, Osajda-residual}.

A graphical presentation of a group has the form $G=\langle f: \Gamma \rightarrow \Theta \rangle$, where $\Theta$ is a connected graph, $f:\Gamma \rightarrow \Theta$ is a graph immersion (that is, a locally injective graph homomorphism), and $G$ is the fundamental group of $\Theta$ modulo the normal subgroup generated by the images of all immersed cycles in $\Gamma$. 
The use of such presentations goes back to the work of Rips and Segev, who constructed torsion-free groups without the unique product property (see \cite{RS-torsion,Steenbock-torsion}). Small cancellation theory over graphical presentations was later used explicitly by Gromov \cite{Gro-random} in his construction of Gromov's monster---a group that contains an expander graph in a weak sense.

There are two 2-dimensional combinatorial complexes naturally associated to such a presentation: the thickened graphical complex $X_t$, where 2-cells are attached to $\Theta$ to fill the cycles coming from $\Gamma$, and the non-thickened graphical complex $X$, where cones over the connected components of $\Gamma$ are attached to $\Theta$ instead. 
In the latter case, each cone is triangulated so that $X$ admits a 2-dimensional combinatorial complex structure where all 2-cells are triangles.
Both complexes have $G$ as their fundamental group. Precise definitions will be given in Section~\ref{sec: prelimiaries}. 

Since Gromov's introduction of hyperbolic groups \cite{Gro87}, the interaction between small cancellation and nonpositive or negative curvature has been an active area of study.
In this article, we focus on non-metric small cancellation conditions.
A classical result by Gersten and Short \cite{GS90} states that classical $C(p)$-$T(q)$ small cancellation groups are hyperbolic when $1/p+1/q<1/2$. 
In the borderline case $1/p+1/q=1/2$, that is, when $(p,q)\in \Set{(3,6),(4,4),(6,3)}$, various links between small cancellation and metric or combinatorial nonpositive curvature have been established: 
classical $C(6)$ groups are systolic \cite{Wise03}, a result extended to the graphical setting in \cite{OP18}; 
graphical $C(4)$-$T(4)$ groups are Helly \cite{Hoda-quadric,CCGHO25}; 
and classical $C(3)$-$T(6)$ groups are CAT(0) \cite{Pride88, BH13, Duda21}. 
In this paper, we extend the last result to the graphical setting.

\begin{theorem}\label{thm: simplicial + T(6) implies CAT(0)}
    Let $G=\langle f: \Gamma \rightarrow \Theta\rangle$ be a $T(q)$ graphical small cancellation presentation with $Girth(\Gamma)\geq p$, where $Girth(\Gamma)$ denotes the length of a shortest cycle in $\Gamma$, and let $X$ be the non-thickened graphical complex associated to this presentation. 
    \begin{enumerate}[label=(\arabic*)]
        \item If $(p,q)=(3,6)$, then $X$ admits a locally CAT(0) metric.
        \item If $(p,q)\in \{(4,5), (3,7)\}$, then $X$ admits a locally CAT(-1) metric.
    \end{enumerate}
\end{theorem}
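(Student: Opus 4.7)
I would prove this by verifying Gromov's link condition for suitable piecewise Euclidean (resp.\ piecewise hyperbolic) metrics on $X$: such a complex is locally CAT$(0)$ (resp.\ CAT$(-1)$) if and only if the link of every vertex, equipped with the angular metric, has systole at least $2\pi$. Each triangular $2$-cell of $X$ has a distinguished ``$\Theta$-edge'' and two ``cone edges'' joining an apex $v_i$ to vertices of $\Theta$. I would give each triangle the shape of an isoceles triangle---Euclidean in case (1), hyperbolic of suitable diameter in case (2)---whose apex angle at $v_i$ equals $\alpha := 2\pi/p$; the base angles at the $\Theta$-vertices are then $\beta=(\pi-\alpha)/2$ in the Euclidean case and strictly smaller in the hyperbolic case, tunable via the triangle's size. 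The link of a cone point $v_i$ is canonically isomorphic, as a graph, to the component $\Gamma_i$ of $\Gamma$, with every edge carrying length $\alpha$; since $Girth(\Gamma)\geq p$, every essential cycle has at least $p$ edges and angular length at least $p\alpha=2\pi$, so this half of the link condition is automatic.

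\textbf{Links at $\Theta$-vertices.} The link of a vertex $w\in\Theta$ is a bipartite graph: on one side the $\Theta$-edges at $w$, on the other the cone edges (one per preimage $u\in f^{-1}(w)$). Each link-edge originates from a triangle at $w$, i.e., a $\Gamma$-edge $e$ incident to some $u\in f^{-1}(w)$, and connects the cone vertex for $u$ to the $\Theta$-vertex $f(e)$, carrying length $\beta$. An essential cycle of combinatorial length $2k$ in this link thus alternates between preimages of $w$ and $\Theta$-edges above $w$, encoding a closed sequence of $k$ pieces of $\Gamma$ meeting at preimages of $w$---precisely the configuration forbidden, for $k<q$, by the graphical $T(q)$ axiom. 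Hence every essential cycle has at least $2q$ edges and angular length at least $2q\beta$, and a direct trigonometric computation yields $2q\beta\geq 2\pi$ iff $1/p+1/q\leq 1/2$, with equality in case (1) and strict inequality in case (2).

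\textbf{Main obstacle.} The technical crux is the translation above: essential cycles in $\mathrm{lk}(w,X)$ must be matched precisely with configurations controlled by the graphical $T(q)$ axiom, and one must separately handle degenerate cycles---those backtracking at a preimage, or staying entirely inside a single component $\Gamma_i$---using the girth assumption and the local injectivity of $f:\Gamma\rightarrow\Theta$. Once this is done, case (1) follows directly from Gromov's link criterion. For case (2), replacing the Euclidean triangles with hyperbolic ones of the same apex angle $\alpha=2\pi/p$ and an appropriately small diameter keeps the base angle $\beta$ strictly above $\pi/q$ by virtue of the strict inequality $1/p+1/q<1/2$, so the link condition persists with slack and yields a locally CAT$(-1)$ metric on $X$.
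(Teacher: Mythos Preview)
Your approach matches the paper's: metrize each triangular $2$-cell as an isosceles triangle with apex angle $2\pi/p$ at the cone tip, then verify the link condition separately at cone tips (via $Girth(\Gamma)\geq p$) and at $\Theta$-vertices (via $T(q)$). The paper packages the second step as Lemma~\ref{lem:T condition and the links}, whose content is exactly your ``translation'' of bipartite link cycles into reduced jasmine disk diagrams in $X_t$. For part~(2) the paper takes the hyperbolic base angle to be exactly $\pi/q$ rather than slightly larger, but this is cosmetic.

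There is one small but genuine gap in your sketch of that translation. Your claim that a $2k$-cycle in $Lk(w,X)$ yields a configuration ``forbidden, for $k<q$, by the graphical $T(q)$ axiom'' is correct only for $3\leq k<q$: the jasmine diagram on $k$ petals has an interior vertex of degree $k$, and $T(q)$ only rules out degrees strictly between $2$ and $q$. For $k=1$ the immersion hypothesis on $f$ indeed suffices, as you note. But for $k=2$---an embedded $4$-cycle in the link, with two distinct preimages $u_1,u_2\in f^{-1}(w)$ and two distinct $\Theta$-edges---the corresponding two-petal configuration has an interior vertex of degree $2$, which $T(q)$ permits; neither local injectivity of $f$ nor the girth of $\Gamma$ excludes this case, and it does not fit either of your ``degenerate'' categories. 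The paper's fix is a doubling trick: traversing the $4$-cycle twice gives an immersed $8$-cycle in $Lk(w,X)$, and since $q\geq 5$ in every case under consideration one has $8<2q$, contradicting the $k=4$ case already handled. With this inserted, your argument is complete and coincides with the paper's.
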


The proofs rely on the link condition for 2-dimensional complexes. We first show that small cancellation conditions provide lower bounds on the girth of the links in a graphical complex $X$. We then endow every 2-cell in $X$ with a Euclidean or hyperbolic metric and show that $X$ satisfies the link condition.

Note that any graphical presentation satisfying the assumptions of either statement of Theorem~\ref{thm: simplicial + T(6) implies CAT(0)} also satisfies the corresponding $C(p)$ small cancellation condition. 
If instead we start with the weaker assumption that a graphical presentation is $C(p)$-$T(q)$, then we can sightly modify the proof of Theorem~\ref{thm: simplicial + T(6) implies CAT(0)} so that the conclusions of the theorem still hold; see Corollary~\ref{cor: C(3) + T(6) implies CAT(0)}.

We now consider group actions on simply connected graphical complexes. Let $\widetilde{X}$ be a simply connected non-thickened graphical complex associated to a $C(p)$-$T(q)$ graphical presentation with $(p,q)\in\{(3,6),(4,5)\}$. Then it can be subdivided into a triangle complex $\widetilde{X}'$ (i.e., a 2-dimensional simplicial complex) and endowed with a piecewise Euclidean metric that is CAT(0), recurrent (in the sense of \cite[Definition 2.1]{OP21-recurrent}), and has rational angles (in the sense of \cite{NOP22-torsion}). 

Let $G$ be a group acting on $\widetilde{X}$ by automorphisms (see the definition in Section \ref{sec: prelimiaries}), then it also acts on $\widetilde{X}'$ by combinatorial automorphisms and by isometries with respect to the given metric. Therefore, either by the recurrent structure and \cite[Main Theorem]{OP21-recurrent} or by the CAT(0) structure and \cite[Theorem A]{OP22-Tits}, we obtain:  

\begin{corollary}\label{cor:Tits alternative}
    If the action is almost free (that is, there is a uniform bound on the cardinalities of all cell stabilizers), then $G$ satisfies the Tits alternative; more precisely, every finitely generated subgroup of $G$ is either virtually cyclic, virtually $\mathbb{Z}^2$, or contains a nonabelian free group. In particular, graphical $C(p)$-$T(q)$ small cancellation groups satisfy the Tits alternative for $(p,q)\in \{(3,6),(4,5)\}$.
\end{corollary}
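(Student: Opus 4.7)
The plan is to reduce the corollary to the direct application of one of the two cited Tits alternative theorems. The paragraph preceding the corollary asserts the existence of a subdivision $\widetilde{X}'$ of $\widetilde{X}$ equipped with a piecewise Euclidean CAT(0) metric that is recurrent and has rational angles; constructing this $\widetilde{X}'$ is the main content of the argument, after which the Tits alternative follows essentially formally.

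For $(p,q)=(3,6)$, Theorem~\ref{thm: simplicial + T(6) implies CAT(0)} already equips $X$ with a locally CAT(0) piecewise Euclidean metric in which every 2-cell is an equilateral Euclidean triangle; lifting to $\widetilde{X}$ gives a CAT(0) metric, and one simply takes $\widetilde{X}'=\widetilde{X}$. All angles are $\pi/3$, hence rational, and recurrence follows because only finitely many local combinatorial shapes of vertex links occur. For $(p,q)=(4,5)$, Theorem~\ref{thm: simplicial + T(6) implies CAT(0)} provides a piecewise hyperbolic CAT(-1) metric rather than a Euclidean one, so a separate construction is required: I would barycentrically subdivide each 2-cell of $\widetilde{X}$ and assign Euclidean shapes to the resulting smaller triangles, chosen so that the link girth bounds produced in the proof of Theorem~\ref{thm: simplicial + T(6) implies CAT(0)} translate into the Euclidean link condition at every vertex of $\widetilde{X}'$. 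Angles are drawn from a finite set of rational multiples of $\pi$, which ensures rationality, and recurrence again follows from boundedness of the local combinatorial types.

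Once $\widetilde{X}'$ is in hand the remainder is formal. Any automorphism of $\widetilde{X}$ preserves the canonical subdivision and the canonical metric, so the action of $G$ on $\widetilde{X}$ extends to a combinatorial and isometric action on $\widetilde{X}'$. Under the almost free hypothesis, applying \cite[Main Theorem]{OP21-recurrent} (via recurrence) or \cite[Theorem A]{OP22-Tits} (via CAT(0) with rational angles) then yields the announced trichotomy. For the ``in particular'' clause, the graphical small cancellation group $G=\pi_1(X)$ acts freely on its universal cover $\widetilde{X}$ by deck transformations, so the almost free hypothesis is automatic. I expect the main technical obstacle to lie precisely in the Euclidean-with-rational-angles construction in the $(4,5)$ case, because the CAT(-1) certificate of Theorem~\ref{thm: simplicial + T(6) implies CAT(0)} does not transfer directly and the combinatorial choice of subdivision shapes must be made to simultaneously satisfy the link condition, rationality, and recurrence.
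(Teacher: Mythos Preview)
Your overall strategy---build $\widetilde{X}'$ with a piecewise Euclidean CAT(0), recurrent, rational-angle metric and then invoke \cite{OP21-recurrent} or \cite{OP22-Tits}---matches the paper. But your concrete constructions in both cases miss the mark.

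In the $(3,6)$ case you assert that the 2-cells are \emph{equilateral} with all angles $\pi/3$. They are not: in the proof of Theorem~\ref{thm: simplicial + T(6) implies CAT(0)} the triangles are isoceles with apex angle $2\pi/3$ at the cone tip and base angles $\pi/6$. With your equilateral choice the link condition fails at any cone tip $v\in V_{tip}(X)$, since $Lk(v,X)\cong\Gamma_i$ may have girth exactly $3$ and $3\cdot\pi/3=\pi<2\pi$. The correct angles $2\pi/3,\pi/6,\pi/6$ are of course still rational multiples of $\pi$. Note also that the hypothesis here is $C(p)$-$T(q)$, not $Girth(\Gamma)\geq p$, so you must pass through the edge-subdivision of Corollary~\ref{cor: C(3) + T(6) implies CAT(0)} rather than Theorem~\ref{thm: simplicial + T(6) implies CAT(0)}; this is also what makes $\widetilde{X}'$ a genuine simplicial complex.

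In the $(4,5)$ case you flag the Euclidean construction as ``the main technical obstacle'' and reach for barycentric subdivision. The paper does something much simpler: it uses the \emph{same} subdivision $\widetilde{X}'$ as in Corollary~\ref{cor: C(3) + T(6) implies CAT(0)} and metrizes every 2-cell as a Euclidean isoceles triangle with angles $\pi/2,\pi/4,\pi/4$ (apex at the cone tip). The link condition then holds because at cone tips the girth is $\geq 4$ and $4\cdot\pi/2=2\pi$, while at vertices of $\Theta'$ the girth is $\geq 2q=10$ by Lemma~\ref{lem:T condition and the links} and $10\cdot\pi/4>2\pi$. No further subdivision is needed, and rationality and recurrence are immediate since only one isometry type of triangle occurs.
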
  

Additionally, since $\widetilde{X}'$ is a CAT(0) triangle complex with rational angles, applying \cite[Theorem 1.1]{NOP22-torsion}, we obtain the following, which proves a particular case of \cite[Meta-Conjecture, Conjecture]{HO-elliptic}:

\begin{corollary}\label{cor:fixed point}
    If $G$ is finitely generated and every element of $G$ fixes a cell in $\widetilde{X}$, then the entire group $G$ fixes a cell in $\widetilde{X}$. 
\end{corollary}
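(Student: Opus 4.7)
The plan is to lift the problem to the subdivided triangle complex $\widetilde{X}'$ and then invoke \cite[Theorem 1.1]{NOP22-torsion}, which (in the form we need) says that a finitely generated group acting by cellular isometries on a CAT(0) triangle complex with rational angles has a global fixed point whenever every element is elliptic. The geometric input is already provided by the paragraphs preceding the corollary: $\widetilde{X}'$ carries a CAT(0) piecewise Euclidean metric with rational angles, and $G$ acts on $\widetilde{X}'$ by combinatorial automorphisms and by isometries. Finite generation of $G$ is a standing hypothesis.

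First I would translate the elementwise fixed-cell hypothesis from $\widetilde{X}$ to $\widetilde{X}'$. Given $g\in G$ and a cell $\sigma$ of $\widetilde{X}$ with $g\sigma=\sigma$ setwise, the subdivision of $\sigma$ inside $\widetilde{X}'$ is canonical, so $g$ permutes the triangles of $\widetilde{X}'$ lying in $\sigma$. The Euclidean barycenter of $\sigma$ is then a $g$-fixed point, so $g$ is elliptic on $\widetilde{X}'$. Applying \cite[Theorem 1.1]{NOP22-torsion} to $G\curvearrowright \widetilde{X}'$ therefore produces a point $x\in\widetilde{X}'$ fixed by all of $G$.

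Next I would push the fixed point back to a fixed cell of $\widetilde{X}$. Let $\tau$ be the open cell of $\widetilde{X}'$ containing $x$. Since $G$ acts on $\widetilde{X}'$ by combinatorial automorphisms and fixes an interior point of $\tau$, it must stabilize $\tau$ setwise. The cell $\tau$ is contained in the interior of a unique closed cell $\sigma$ of $\widetilde{X}$, and because the action on $\widetilde{X}'$ descends from a combinatorial action on $\widetilde{X}$, the cell $\sigma$ is stabilized by $G$, which is what the statement requires.

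The only real work is verifying the two transfers across the subdivision, both of which are routine given that the subdivision refines the cell structure canonically. The main obstacle, such as it is, is matching the hypotheses of \cite[Theorem 1.1]{NOP22-torsion} exactly to the present setting (finite generation, cellular isometries, rational angles, every element elliptic), so that no extra assumption is smuggled in; the CAT(0) and rational-angle features have been built into $\widetilde{X}'$ precisely for this purpose.
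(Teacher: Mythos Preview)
Your proposal is correct and follows the same route as the paper: both arguments reduce to invoking \cite[Theorem 1.1]{NOP22-torsion} on the subdivided complex $\widetilde{X}'$, using the CAT(0) piecewise Euclidean structure with rational angles established just before the corollary. You simply spell out the (routine) transfers of fixed cells across the subdivision, which the paper leaves implicit.
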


The paper is organized as follows. In Section \ref{sec: prelimiaries}, we recall the relevant definitions from graphical small cancellation theory; in Section \ref{sec: Small cancellation condition T}, we discuss several implications of the graphical small cancellation condition $T$, in particular how it bounds the girth of some links in a graphical complex; finally, in Section \ref{sec: Proof of the results}, we prove Theorem \ref{thm: simplicial + T(6) implies CAT(0)} and the corollaries. 

\section{Preliminaries}
\label{sec: prelimiaries}

In this section, we give basic definitions in graphical small cancellation theory. Our exposition follows closely \cite[Section 6]{OP18} and \cite[Section 6.4]{CCGHO25}, where graphical C(6) and $C(4)$-$T(4)$ groups are studied, respectively.

A continuous map between CW complexes $X$ and $Y$ is called \textbf{combinatorial} if it maps each open cell of $X$ homeomorphically onto an open cell of $Y$. A CW complex is \textbf{combinatorial} provided that the attaching map of each open cell of $X$ is combinatorial for a suitable subdivision. 

Throughout this article, all complexes and maps are assumed to be combinatorial. We refer to an $n$-dimensional combinatorial complex simply as an $n$-complex, and we identify graphs with 1-complexes. We also refer to 0-cells of a complex as vertices and 1-cells as edges.

For $n\in \mathbb{N}$, let $P_n$ denote a \textbf{path graph} with $n$ edges, whose vertices are labeled from $0$ to $n$ in order. A \textbf{path} in a complex $X$ is a map $\gamma: P_n \rightarrow X$ for some $n$, and $n$ is called the \textbf{combinatorial length} of $\gamma$, denoted by $|\gamma|$. 
The path $\gamma$ is \textbf{closed} if $\gamma(0)=\gamma(n)$. 
A path $\gamma': P_m\rightarrow X$ is called a \textbf{subpath} of $\gamma$ if there exists an injective, order-preserving map $\iota: P_m\rightarrow P_n$ such that $\gamma' = \gamma \circ \iota$. The notations for a path of length 1 and a directed edge will be used interchangeably, with no distinction made between the two. 

For $n\in \mathbb{N}_{>0}$, let $C_n$ denote a \textbf{cycle graph} with $n$ edges, whose vertices are labeled by $\mathbb{Z}/n\mathbb{Z}$. A \textbf{cycle} in $X$ is a map $\tau: C_n \rightarrow X$ for some $n$, and its \textbf{combinatorial length} is denoted $|\tau|=n$. 
Let $R_n$ denote an $n$-gon, i.e., a disk whose 1-skeleton is $C_n$. Every 2-cell in $X$ can be represented by a map $R_n\rightarrow X$ for some $n$. 
We write $P$, $C$, or $R$ for the domain of a path, a cycle, or a representative of a 2-cell, respectively, when the size is not specified. 

For $m\in \mathbb{N}$ and $n\in \mathbb{Z}/n\mathbb{Z}$, fix a map $\gamma_{m,n}: P_m\rightarrow C_n$, which is locally injective and extends the canonical map from $\{0,...,m\}$ to $\mathbb{Z}/n\mathbb{Z}$. We say that a graph $\Gamma$ is \textbf{path-cycle extensible} if every immersed path (i.e., locally injective path)  in $\Gamma$ factors through an immersed cycle. That is, for every immersed path $\gamma: P_m\rightarrow \Gamma$, there exists an immersed cycle $\tau: C_n\rightarrow \Gamma$ such that $\gamma = \tau\circ \gamma_{m,n}$. 

\begin{remark} \label{rm: path-cycle extensible}
    Note that a graph $\Gamma$ is path-cycle extensible if and only if every vertex of $\Gamma$ lies in some immersed cycle. Therefore, for any connected graph $\Omega$, we can consider the subgraph $\Omega'$ induced by all the vertices of $\Omega$ that lie in immersed cycles. If $\Omega$ is a tree, then $\Omega'$ is empty; if $\Omega$ is not a tree, then $\Omega'$ is a connected, path-cycle extensible subgraph that carries the same fundamental group as $\Omega$. For a finite graph, being path-cycle extensible is equivalent to having no vertices of degree 1. 
\end{remark}

Given a graph immersion $f:\Gamma \rightarrow \Theta$ with $\Theta$ connected, we define a group $G$ as follows.
Decompose the graph $\Gamma = \coprod_i \Gamma_i$, where each graph $\Gamma_i$ is a connected component. 
Fix a vertex $u\in \Theta$, and for each $i$, choose a vertex $u_i\in \Gamma_i$ and a path $\gamma_i$ in $\Theta$ from $u$ to $f(u_i)$. 
Define a homomorphism $f_i: \pi_1(\Gamma_i,u_i)\rightarrow \pi _1(\Theta,u)$, $[\gamma]\mapsto [\gamma_i\cdot f\circ\gamma \cdot \overline{\gamma_i}]$, where $\gamma$ is a closed path in $\Gamma_i$ based at $u_i$, $\cdot$ denotes path concatenation, and $\bar{\cdot}$ denotes path reversal. 
Then define $G \coloneqq \pi_1(\Theta,u) \big/ \big\langle\big\langle \bigcup_i f_i(\pi_1(\Gamma_i,u_i)\big\rangle\big\rangle$. 
This group is well-defined up to isomorphism, independent of the choices of the basepoints and the connecting paths. 
$f:\Gamma\rightarrow \Theta$ is called a \textbf{graphical presentation} of $G$, denoted $G=\langle f:\Gamma \rightarrow \Theta \rangle$.

There are two natural ways to construct a 2-complex with fundamental group $G=\langle f:\Gamma \rightarrow \Theta \rangle$. 
First, let the 1-skeleton be $\Theta$. 
For every immersed cycle $C\rightarrow \Gamma$, attach a 2-cell to $\Theta$ along the composition $C\rightarrow \Gamma \rightarrow \Theta$. 
The resulting complex is called the \textbf{thickened graphical complex} associated to $G$, and we denote it by $X_t$. 
Alternatively, for each connected component $\Gamma_i$ of $\Gamma$, attach the simplicial cone $Cone(\Gamma_i)$ to $\Theta$ via the restriction of $f$ on $\Gamma_i$. The resulting complex is called the \textbf{non-thickened graphical complex} associated to $G$, and we denote it by $X$. Note that every 2-cell in $X$ is a triangle. An \textbf{automorphism} of a non-thickened graphical complex $X$ is a combinatorial automorphism that preserves $\Theta$. 

Throughout the paper, we will assume that $\Gamma$ is path-cycle extensible.
This involves no loss of generality: for each connected component $\Gamma_i$ of $\Gamma$, we may replace $\Gamma_i$ by the path-cycle extensible subgraph $\Gamma'_i$ described in Remark~\ref{rm: path-cycle extensible}. Let $\Gamma' = \coprod_i \Gamma'_i$. Then the graphical presentations $\langle f: \Gamma \rightarrow \Theta\rangle $ and $\langle f|_{\Gamma'}: \Gamma' \rightarrow \Theta \rangle$ define the same group, the same thickened graphical complex, and homotopy equivalent non-thickened graphical complexes.  

\begin{definition}\label{def:piece}
    A \textbf{piece} in a thickened graphical complex $X_t$ is a nontrivial immersed path $\gamma: P\rightarrow \Theta$ in the graph $\Theta$ that admits two distinct lifts to $\Gamma$; that is, there exist two distinct paths $\gamma_1, \gamma_2:P\rightarrow \Gamma$ such that $f\circ \gamma_1 = f\circ \gamma_2 = \gamma$ but $\gamma_1\neq \gamma_2$. 
\end{definition}

Definition~\ref{def:piece} agrees with \cite[Definition 6.3]{OP18}, provided that the graph $\Gamma$ is path-cycle extensible. Note that if $\gamma$ is a piece in the complex $X_t$, and $\gamma'$ is a nontrivial subpath of $\gamma$, then $\gamma'$ is also a piece. This follows from the fact that the map $f:\Gamma \rightarrow \Theta$ is an immersion.

A \textbf{disk diagram} is a finite, contractible 2-complex that embeds in the plane. A \textbf{disk diagram in a thickened graphical complex} $X_t$ is a map from such a disk diagram $D$ to $X_t$.  
A \textbf{piece in a disk diagram} $D$ is an immersed nontrivial path $P\rightarrow D$ that factors in two distinct ways through 2-cells of $D$; that is, there exist 2-cells $R_m\rightarrow D$ and $R_n\rightarrow D$ such that the path $P\rightarrow D$ factors both as $P\rightarrow R_m \rightarrow D$ and $P\rightarrow R_n \rightarrow D$, and there is no isomorphism $R_m\rightarrow R_n$ making the following diagram commute:
\[
\begin{tikzcd}
    P \arrow[r] \arrow[d] & R_m \arrow[d] \arrow[ld]\\
    R_n \arrow[r] & D
\end{tikzcd}
\]

\begin{example}
    A \textbf{jasmine} with $n$ petals is a disk diagram that is isomorphic to $\coprod_{i=1}^n R_{m_i}/\sim$, where $m_i\geq 3$ for all $i$, and the identification is given by \[ \gamma_{m_{i-1},m_{i-1}}\big((m_{i-1},m_{i-1}-1)\big) \sim \gamma_{m_i,m_i}\big((0,1)\big) \]
    for $i$, with the convention $m_0\coloneqq m_n$. 
    When $n\geq 3$, the pieces in such a disk diagram correspond precisely to the interior edges. See Figure \ref{fig:Examples of jasmine diagrams} for examples. 
    \begin{figure}[h]
        \centering
        \includegraphics[width=0.7\linewidth]{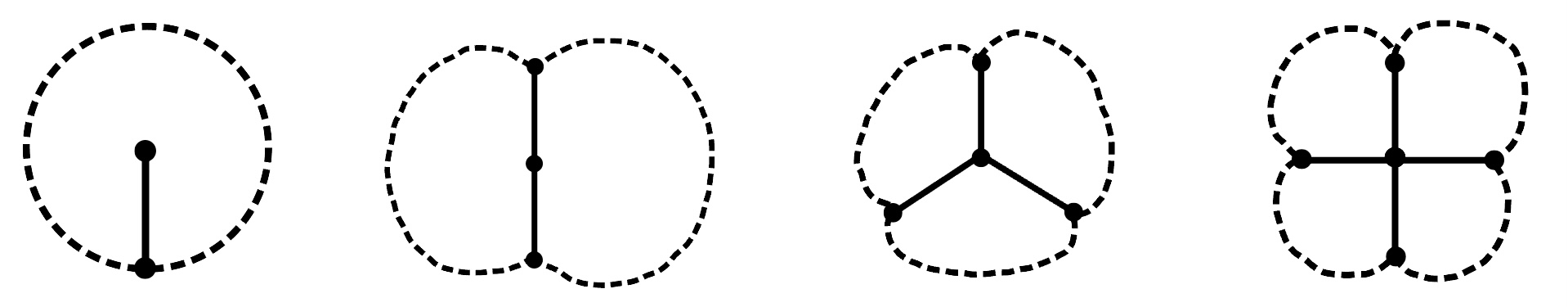}
        \caption{Jasmine diagrams with one to four petals}
        \label{fig:Examples of jasmine diagrams}
    \end{figure}
\end{example}

A disk diagram $D\rightarrow X_t$ in a thickened graphical complex is \textbf{reduced} if for every piece $P\rightarrow D$ in the disk diagram, the composition $P\rightarrow D\rightarrow X_t$ is again a piece. 

\begin{definition}
    Let $p,q\in \mathbb{N}_{>0}$. The thickened graphical complex $X_t$ associated to a graphical presentation $G=\langle f:\Gamma \rightarrow \Theta\rangle$ satisfies:
    \begin{itemize}
        \item the \textbf{$C(p)$ small cancellation condition} if there is no immersed cycle $C\rightarrow\Gamma$ such that the composition $C\rightarrow \Gamma \rightarrow \Theta$ is a concatenation of fewer than $p$ pieces in $X_t$;
        \item the \textbf{$T(q)$ small cancellation condition} if for every reduced disk diagram $D\rightarrow X_t$, there is no interior vertex of $D$ whose degree is greater than 2 and less than $q$. 
    \end{itemize}  
    A presentation $\langle f:\Gamma \rightarrow \Theta \rangle$ and its associated non-thickened graphical complex $X$ are said to satisfy the $C(p)$ (respectively, $T(q)$) condition if the associated thickened complex $X_t$ does.
\end{definition}

\begin{remark}
     If $C_n\rightarrow \Gamma$ is an immersed cycle such that the composition $P_n\xrightarrow{\gamma_{n,n}} C_n\rightarrow \Gamma \rightarrow \Theta$ is a concatenation of multiple identical paths, then the path $P_n \rightarrow C_n \rightarrow \Gamma \rightarrow \Theta$ is a piece. However, this situation is excluded already by the $C(2)$ condition. For a less restrictive condition that allows proper powers, see for example \cite[Definition 1.3]{Gruber15}.
\end{remark}

\section{Small cancellation condition $T$}
\label{sec: Small cancellation condition T}

In this section, we take a closer look at the $T(q)$ small cancellation condition and discuss some of its implications.
We begin by establishing a graphical analogue of an observation of Pride \cite[p.~165]{Pride88}, namely that every piece in a classical $T(q)$ small cancellation complex has length 1 when $q\geq 5$. 
The proof provided here follows essentially the lines of \cite[Lemma~3.5]{McCammond-Wise}.

\begin{lemma} \label{Lem: pieces have length 1}
    Let $\langle f:\Gamma \rightarrow \Theta\rangle$ be a $T(q)$ graphical small cancellation presentation with $q\geq 5$, then every piece in the associated thickened graphical complex $X_t$ has length 1.
\end{lemma}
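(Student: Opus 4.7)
The plan is to argue by contradiction. Suppose $X_t$ contains a piece $\gamma$ with $|\gamma| \geq 2$; I will produce a reduced disk diagram $D \to X_t$ containing an interior vertex whose degree is strictly between $2$ and $q$, contradicting the $T(q)$ hypothesis.

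By the subpath property observed right after Definition~\ref{def:piece}, I may reduce to the case $\gamma = e_1 e_2$ of length exactly $2$, with two distinct lifts $(a_1,a_2), (b_1,b_2): P_2 \to \Gamma$ satisfying $f(a_i) = f(b_i) = e_i$. Using that $f:\Gamma \to \Theta$ is an immersion, I first verify $a_i \neq b_i$ for $i=1,2$: if, for instance, $a_1 = b_1$, then $a_2$ and $b_2$ would share a starting vertex and both map to $e_2$, forcing $a_2 = b_2$ by the immersion property and contradicting the distinctness of the two lifts. Invoking path-cycle extensibility of $\Gamma$, I then extend each of $(a_1, a_2)$ and $(b_1, b_2)$ to an immersed cycle $C_1, C_2 \to \Gamma$, yielding 2-cells $R_1, R_2 \to X_t$ each containing $\gamma$ in its boundary.

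The remaining and main step, following \cite[Lemma~3.5]{McCammond-Wise}, is to build a reduced disk diagram out of $R_1, R_2$ (and possibly further 2-cells coming from other lifts of the sub-edges $e_1, e_2$) that exhibits an interior vertex of degree $3$ or $4$. The naive construction—gluing $R_1$ and $R_2$ along the entire $\gamma$—makes the midvertex $v$ of $\gamma$ interior of degree $2$, which is compatible with $T(q)$ and therefore insufficient on its own; this is exactly where the argument becomes delicate and constitutes the main obstacle. My plan to overcome it is to first extend $\gamma$ maximally within the two cycles $C_1, C_2$, so that at both endpoints of the extended piece the cycles diverge along distinct edges, and then exploit the resulting local divergence together with the link structure at $v$ (where two distinct 2-cells occupy the same corner $(e_1, e_2)$) to realize a reduced disk diagram with an interior vertex of degree $3$ or $4$. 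Since $T(q)$ with $q \geq 5$ forbids such interior degrees, this contradiction establishes that every piece has length $1$.
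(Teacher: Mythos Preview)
Your setup is fine up through the extension of the two lifts to immersed cycles, but the proposal does not actually overcome the obstacle you yourself flag. The plan to ``extend $\gamma$ maximally'' and then use divergence at the endpoints is not a construction: the endpoints of a maximal common arc lie on the \emph{boundary} of the two-cell diagram you are imagining, so their divergence does not produce an interior vertex of small degree. The further appeal to ``the link structure at $v$'' is only a restatement of the difficulty (two 2-cells share the corner $(e_1,e_2)$, so the naive glue has interior degree $2$), not a resolution of it. As written, nothing in the proposal forces an interior vertex of degree $3$ or $4$ into a \emph{reduced} diagram.

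The missing idea, which is the content of the paper's argument (and of the McCammond--Wise lemma you cite), is to use \emph{two copies of each} of the two 2-cells. Take $R'_m, R''_m$ with boundary cycle $\tau_1$ and $R'_n, R''_n$ with boundary cycle $\tau_2$, and glue them cyclically around the midvertex $v$ of $\gamma=e_1e_2$ in the pattern $R'_m, R'_n, R''_m, R''_n$, where consecutive petals share a single edge, alternately $e_2, e_1, e_2, e_1$. This is a jasmine diagram with four petals; the midvertex is now interior of degree $4$. Every internal edge maps to $e_1$ or $e_2$, hence to a length-$1$ subpath of the piece $\gamma$ (or $\bar\gamma$), so the diagram is reduced. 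Since $q\geq 5$, an interior vertex of degree $4$ contradicts $T(q)$. You should replace the last paragraph of your proposal with this explicit construction; without it the argument is incomplete.
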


\begin{proof}
    Suppose there exists a length 2 piece $\gamma: P_2\rightarrow \Theta$ in $X_t$. By definition, there are two distinct immersed paths $\gamma_1, \gamma_2: P_2 \rightarrow \Gamma$ such that $f\circ \gamma_i=\gamma$ for $i=1,2$. Since $\Gamma$ is path-cycle extensible, each $\gamma_i$ factors through an immersed cycle, i.e., there exist immersed cycles $\tau_1: C_m\rightarrow \Gamma$ and $\tau_2: C_n\rightarrow \Gamma$ such that $\gamma_1 = \tau_1\circ \gamma_{2,m}$ and $\gamma_2 = \tau_2\circ \gamma_{2,n}$. We may assume $m,n\geq 3$, because if, for instance, $m<3$, we can replace $\tau_1$ by the composition $C_{3m}\rightarrow C_m\xrightarrow{\tau_1} \Gamma$, where $C_{3m}\rightarrow C_m$ is a covering map. Let $R_m\rightarrow X_t$ and $R_n\rightarrow X_t$ be 2-cells in $X_t$ with boundary maps $f\circ \tau_1$ and $f\circ \tau_2$, respectively. Let $D$ be a jasmine diagram with four petals, constructed by taking the disjoint union of two copies of $R_m$, denoted $R'_m$ and $R''_m$, and two copies of $R_n$, denoted $R'_n$ and $R''_n$, and identifying the following pairs of edges: $\gamma'_{2,m}\big((1,2)\big)\sim \gamma'_{2,n}\big((1,2)\big)$, $\gamma'_{2,n}\big((0,1)\big)\sim \gamma''_{2,m}\big((0,1)\big)$, $\gamma''_{2,m}\big((1,2)\big)\sim \gamma''_{2,n}\big((1,2)\big)$, $\gamma''_{2,n}\big((0,1)\big)\sim \gamma'_{2,m}\big((0,1)\big)$, see Figure \ref{fig:A disk diagram with four petals}.
    \begin{figure}
        \centering
        \includegraphics[width=0.25\linewidth]{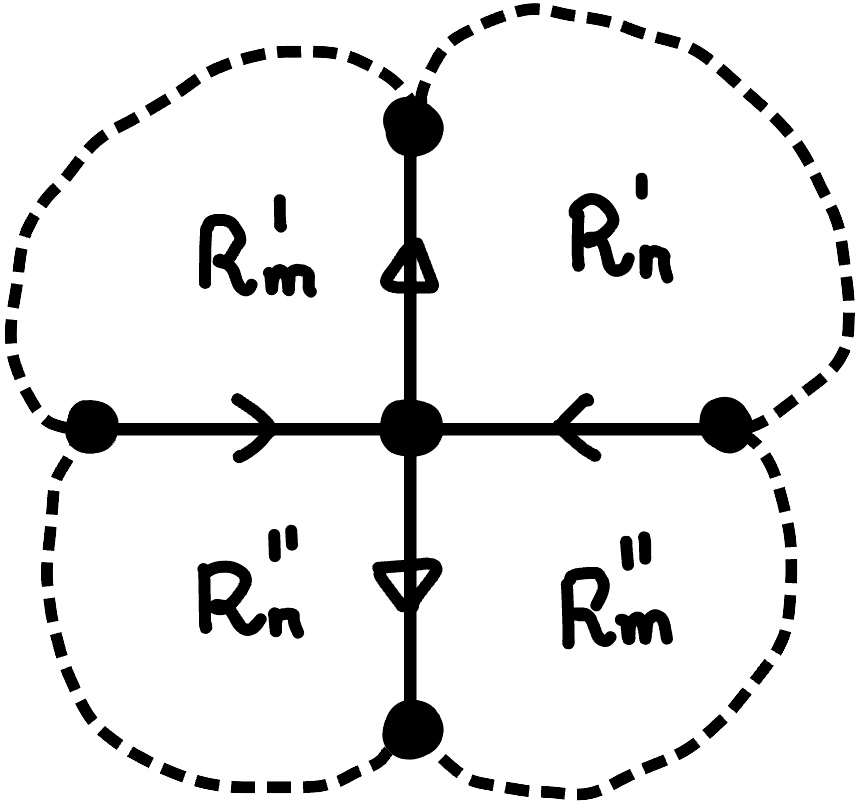}
        \caption{A disk diagram with four petals}
        \label{fig:A disk diagram with four petals}
    \end{figure}Then the maps $R_m\rightarrow X_t$ and $R_n\rightarrow X_t$ induce a map $D\rightarrow X_t$. 
    Since every piece in $D$ maps to a subpath of $\gamma$ or $\bar{\gamma}$ (the reversal of $\gamma$), the image is again a piece in $X_t$. Therefore, the map $D\rightarrow X_t$ defines a reduced disk diagram. 
    However, $D$ has an internal vertex of degree 4, which contradicts the $T(q)$ condition. Hence, it is impossible for a piece of length 2 or greater to exist.
\end{proof}

Next, we discuss the connection between the $T(q)$ condition and the lengths of cycles in certain links of a graphical complex.

Let $Y$ be a 2-complex and let $v\in Y$ be a vertex. The \textbf{link} of $v$ in $Y$, denoted $Lk(v,Y)$, is the graph defined as follows. First, identify each edge of $Y$ with the unit interval and equip the 1-skeleton $Y^1$ with the induced metric. For $0<\epsilon<\frac{1}{2}$, let $V_\epsilon:=\{y\in Y^1\,|\,d(v,y)=\epsilon\}$. For two distinct points $y,y'\in V_\epsilon$, join $y$ and $y'$ by an edge if and only they are contained in the closure of a common 2-cell of $Y$. Let $E_\epsilon$ denote the collection of all such edges. The resulting graph $(V_\epsilon,E_\epsilon)$ is independent of the choice of $\epsilon$ up to graph isomorphism. Let $Lk(v,Y):=(V_\epsilon,E_\epsilon)$. Intuitively, $Lk(v,Y)$ may be viewed as the intersection of $Y$ with a sufficiently small sphere centered at $v$. Equivalently, if we associate to each edge $\{u,v\}$ of $Y$ two oppositely oriented edges $(u,v)$ and $(v,u)$, then the vertices of $Lk(v,Y)$ correspond bijectively to the oriented edges with initial vertex $v$. Two such vertices are joined by an edge in $Lk(v,Y)$ if and only if the corresponding oriented edges span a corner of a 2-cell of $Y$. 

In the classical setting, it is known (see, for example, \cite[p.~296]{Hill-Pride-Stephen} or \cite[p.611]{McCammond-Wise}) that, under suitable assumptions, a 2-complex satisfies the $T(q)$ small cancellation condition if and only if the link of every vertex contains no immersed cycle whose length is greater than 2 and less than $q$.

In the graphical setting, let $\langle f:\Gamma \rightarrow \Theta\rangle$ be a graphical presentation, and let $X$ be the associated non-thickened graphical complex. We begin by describing the combinatorial structure of $X$.

Write the graph $\Gamma = \coprod_i \Gamma_i$ as the disjoint union of its connected components, called \textbf{relators}. 
Recall that $X$ is constructed by attaching the simplicial cone $Cone(\Gamma_i)$ to the graph $\Theta$ for each relator $\Gamma_i$.
Therefore, the vertex set of $X$ is the disjoint union of the vertex set of $\Theta$ and the collection of cone tips, i.e., $V(X)=V(\Theta)\coprod V_{tip}(X)$. 
Let $E_{tip}(X)$ be the set of all edges of $X$ that are incident to a cone tip. 
Then the edge set of $X$ decomposes as $E(X)=E(\Theta)\coprod E_{tip}(X)$. 
For the tip $v_i$ of a cone $Cone(\Gamma_i)$, the link $Lk(v_i,X)$ is isomorphic to its base $\Gamma_i$. 
For a vertex $v\in V(\Theta)$, the link $Lk(v,X)$ is a bipartite graph, since every corner at $v$ in $X$ is formed by one edge in $E(\Theta)$ and one edge in $E_{tip}(X)$. 
Hence, every cycle in $Lk(v,X)$ has even length.

\begin{lemma} \label{lem:T condition and the links}
    Let $\langle f:\Gamma \rightarrow \Theta\rangle$ be a T(q) graphical presentation with $q\geq 4$. Then for any vertex $v\in V(\Theta)$, every immersed cycle in $Lk(v,X)$ has length either $4$ or at least $2q$. Moreover, when $q\geq 5$, there are no immersed cycles of length 4 in $Lk(v,X)$. Therefore, we have $Girth(Lk(v,X))\geq 2q$.
\end{lemma}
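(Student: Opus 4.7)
My plan is to argue by contradiction. Suppose $Lk(v,X)$ contains an immersed cycle $\sigma$ of length $2k$ with $2<2k<2q$. Since $Lk(v,X)$ is bipartite, $\sigma$ has even length, and since no two distinct triangles of $X$ can share both edges of a corner at $v$ (by the immersion property of $f$), $Lk(v,X)$ has no digons; hence the smallest possible cycle length is $4$. For $k\geq 3$ I will build a reduced disk diagram $D\to X_t$, namely a $k$-petaled jasmine whose centre maps to $v$, realising $v$ as an interior vertex of $D$ of degree $k$ with $2<k<q$; this contradicts $T(q)$. The remaining case $k=2$ (length $4$), which only needs to be ruled out when $q\geq 5$, will be handled by producing a length-$2$ piece in $X_t$ and invoking Lemma~\ref{Lem: pieces have length 1}.

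For the jasmine construction, write $\sigma$ in bipartite form $a_1,b_1,a_2,b_2,\ldots,a_k,b_k$, where $a_i$ is a $\Theta$-edge at $v$ and $b_i$ is a tip-edge corresponding to a preimage $w_i\in V(\Gamma_{j_i})$ of $v$. The two corners of $\sigma$ at $b_i$ come from two edges of $\Gamma_{j_i}$ incident to $w_i$ whose $f$-images are $a_i$ and $a_{i+1}$; these edges are distinct because $a_i\neq a_{i+1}$ (immersion of $\sigma$) and $f$ is an immersion at $w_i$. Concatenating them gives an immersed path $\gamma_i:P_2\to \Gamma_{j_i}$ through $w_i$ with $f\circ\gamma_i=\overline{a_i}\cdot a_{i+1}$. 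Path-cycle extensibility then extends each $\gamma_i$ to an immersed cycle $\tau_i:C_{n_i}\to \Gamma_{j_i}$, with $n_i\geq 3$ after passing to a suitable covering if necessary, yielding a $2$-cell $R_{n_i}\to X_t$.

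I then glue these $k$ polygons into a $k$-petaled jasmine $D$ by identifying, for each $i$, the edge of $R_{n_i}$ coming from the second edge of $\gamma_i$ (labelled $a_{i+1}$) with the edge of $R_{n_{i+1}}$ coming from the reversed first edge of $\gamma_{i+1}$ (also labelled $a_{i+1}$), so that all petals share the same centre vertex, which I send to $v$. For reducedness of $D\to X_t$, the Example tells me that when $k\geq 3$ the pieces in a jasmine are exactly its interior edges, so I only need to check that each interior edge $a_{i+1}$ maps to a piece in $X_t$. Its two lifts to $\Gamma$, at $w_i\in \Gamma_{j_i}$ and $w_{i+1}\in \Gamma_{j_{i+1}}$, are distinct lifts because $b_i\neq b_{i+1}$ (again from $\sigma$ being immersed, since otherwise two consecutive edges of $\sigma$ would coincide). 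Thus $D\to X_t$ is a reduced disk diagram with an interior vertex of degree $k\in(2,q)$, contradicting $T(q)$.

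For the remaining case $k=2$ with $q\geq 5$: the length-$2$ paths $\gamma_1$ in $\Gamma_{j_1}$ and $\overline{\gamma_2}$ in $\Gamma_{j_2}$ both have $f$-image $\overline{a_1}\cdot a_2$, and they are distinct lifts because $b_1\neq b_2$ forces $(j_1,w_1)\neq(j_2,w_2)$. Hence $\overline{a_1}\cdot a_2$ is a piece of length $2$ in $X_t$, contradicting Lemma~\ref{Lem: pieces have length 1}. The main obstacle I anticipate is the bookkeeping: translating the combinatorics of $\sigma$ in the bipartite link into concrete edges and paths in the cones $\Gamma_{j_i}$, and systematically using the immersion properties of $f$ and of $\sigma$ to ensure that every identification produces genuinely distinct lifts, and hence genuine pieces, in $X_t$.
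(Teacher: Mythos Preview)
Your argument is correct and follows the same architecture as the paper's proof: rule out $2$-cycles via the immersion property of $f$, and for $3\le k<q$ build a $k$-petaled jasmine $D\to X_t$ from the bipartite link cycle, extending each length-$2$ path $\gamma_i$ to an immersed cycle by path-cycle extensibility, then check reducedness by showing each interior edge is a genuine piece (using $b_i\neq b_{i+1}$).

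The one substantive difference is in the $4$-cycle case for $q\ge 5$. You extract a length-$2$ piece $\overline{a_1}\cdot a_2$ from the two lifts $\gamma_1$ and $\overline{\gamma_2}$ and invoke Lemma~\ref{Lem: pieces have length 1}. The paper instead traverses the $4$-cycle twice to obtain an immersed $8$-cycle in $Lk(v,X)$ and applies the already-established case (since $8<2q$). Both are short and valid; the paper's doubling trick keeps the lemma self-contained, while your route makes explicit that the obstruction is precisely a long piece, which is a nice conceptual observation. Unwinding either argument ultimately produces a $4$-petaled jasmine, so the difference is one of packaging rather than substance.
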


\begin{proof} 

    First, we show that there is no immersed cycle of length 2 in $Lk(v,X)$. 
    Suppose, for contradiction, that such a cycle exists.
    Let one vertex in this cycle represent the directed edge $\vec{e}_1$ from $v$ to the cone tip $v_i$ of $Cone(\Gamma_i)$. 
    Let the other vertex represent the directed edge $\vec{e}_2$ from $v$ to a vertex $u\in V(\Theta)$, as illustrated in Figure \ref{fig:Two cycle in the link}.
    \begin{figure}
        \centering
        \includegraphics[width=0.475\textwidth]{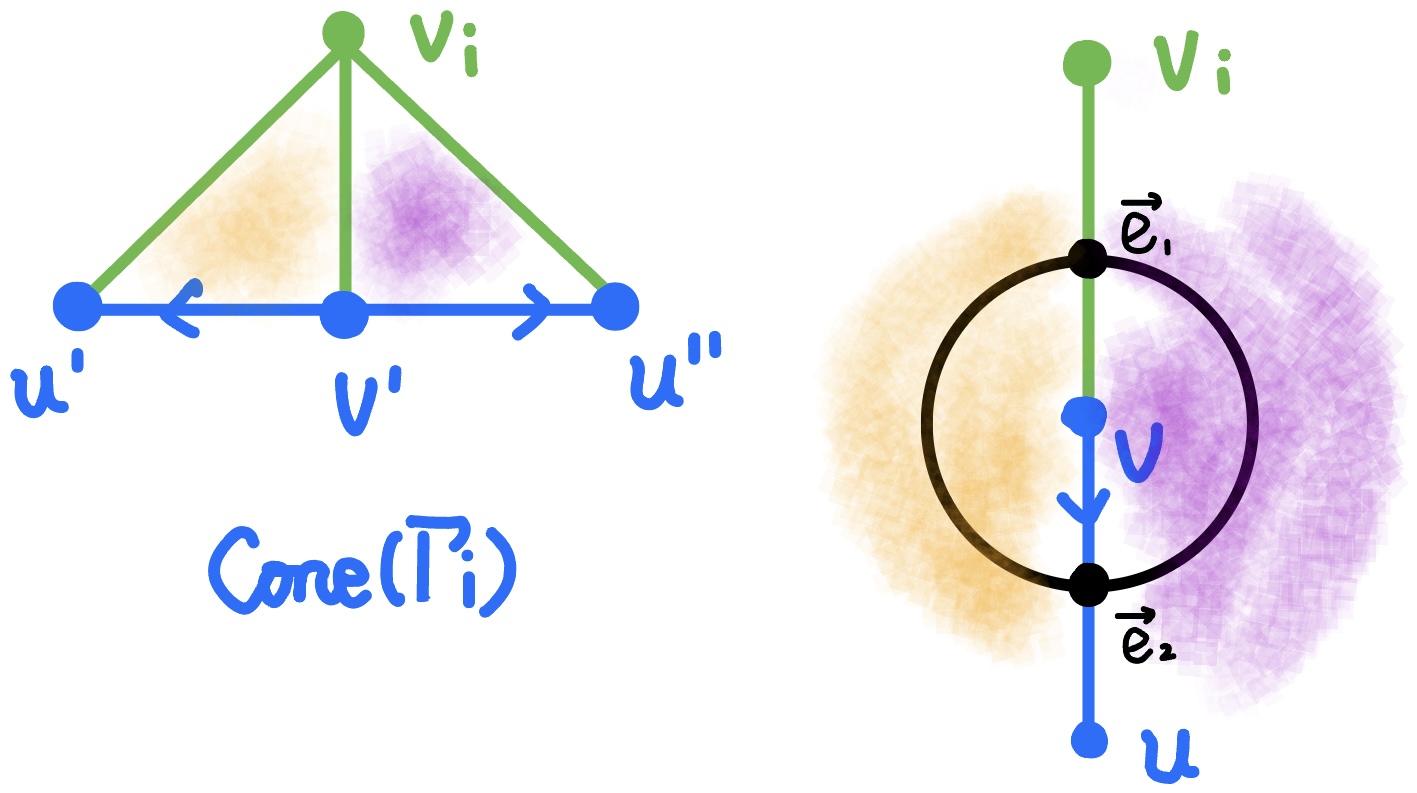}
        \caption{2-cycle in $Lk(v,X)$}
        \label{fig:Two cycle in the link}
    \end{figure}
    Since $\vec{e}_1$ has a unique lift $(v',v_i)$ in $Cone(\Gamma_i)$ for some $v'\in \Gamma_i$, the lifts of the corners represented by the edges of the 2-cycle must also lie in $Cone(\Gamma_i)$ and must be incident to $(v',v_i)$. 
    Denote these lifted corners by $\angle v_i v' u'$ and $\angle v_i v' u''$, for some $u',u''\in \Gamma_i$. 
    Because we began with an immersed cycle in $Lk(v,X)$, the directed edges $(v',u')$ and $(v',u'')$ are distinct.
    However, both map to $\vec{e}_2$, contradicting the assumption that $\Gamma \rightarrow \Theta$ is an immersion. 
    Therefore, there is no immersed cycle of length 2 in $Lk(v,X)$. 

    Second, we show that there is no immersed cycle in $Lk(v,X)$ whose length is greater than 4 and less than $2q$. 
    We discuss only the case of length 6, since the general case is analogous, with heavier notation. 
    Suppose, for contradiction, that $Lk(v,X)$ contains an immersed cycle of length 6, with vertices representing directed edges $\vec{e}_1$, $\vec{e}_2$, $\vec{e}_3$, $\vec{e}_4$, $\vec{e}_5$, $\vec{e}_6$, which connect $v$ to $v_i$, $w$, $v_j$, $x$, $v_k$, $u$, where $v_i, v_j, v_k\in V_{tip}(X)$ and $u,w,x\in V(\Theta)$. In $Cone(\Gamma_i)$, let $(v',v_i)$ be the lift of $\vec{e}_1$, and denote the lifts of the corners corresponding to $\{\vec{e}_1,\vec{e}_6\}$ and $\{\vec{e}_1,\vec{e}_2\}$ by $\angle v_i v' u'$ and $\angle v_i v' w'$ for some $u',w'\in \Gamma_i$. 
    Similarly, in $Cone(\Gamma_j)$, let $(v'',v_j)$ be the lift of $\vec{e}_3$, and denote the lifts of the corners corresponding to $\{\vec{e}_3,\vec{e}_2\}$ and $\{\vec{e}_3,\vec{e}_4\}$ by $\angle v_j v'' w''$ and $\angle v_i v'' x''$, for some $w'', x''\in \Gamma_j$. 
    Finally, in $Cone(\Gamma_k)$, let $(v''',v_k)$ be the lift of $\vec{e}_5$, and denote the lifts of the corners corresponding to $\{\vec{e}_5,\vec{e}_4\}$ and $\{\vec{e}_5,\vec{e}_6\}$ by $\angle v_k v''' x'''$ and $\angle v_k v''' u'''$, for some $x''', u'''\in \Gamma_k$; see Figure \ref{fig:Six cycle in the link}.
    \begin{figure}
        \centering
        \includegraphics[width=0.6\linewidth]{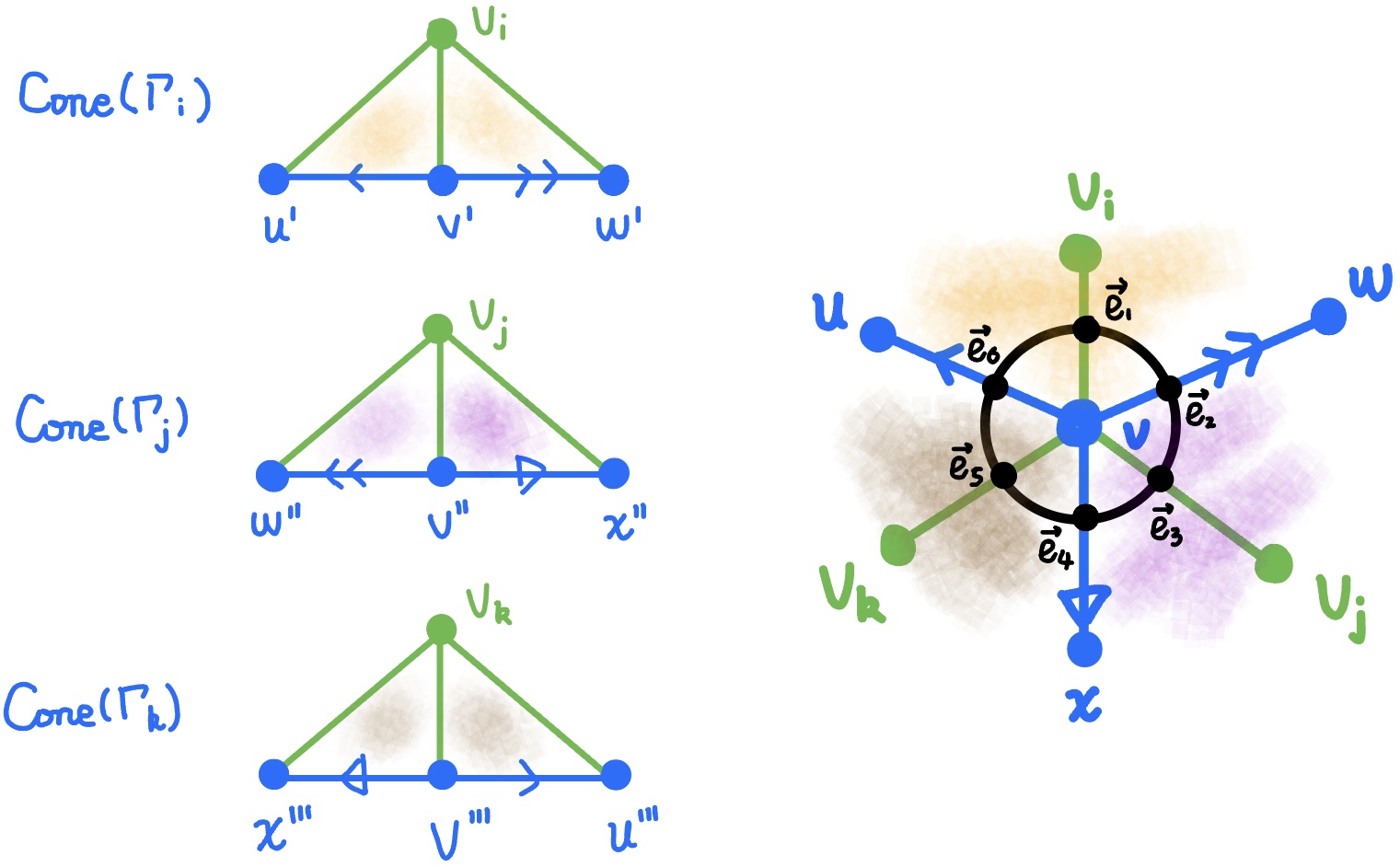}
        \caption{6-cycle in $Lk(v,X)$}
        \label{fig:Six cycle in the link}
    \end{figure}
    Let $\gamma_1: P_2\rightarrow \Gamma_i$ be the path from $u'$ to $w'$ via $v'$; $\gamma_2: P_2\rightarrow \Gamma_j$ the path from $w''$ to $x''$ via $v''$; and $\gamma_3: P_2\rightarrow \Gamma_k$ the path from $x'''$ to $u'''$ via $v'''$. 
    Since we began with an immersed cycle in $Lk(v,X)$, each $\gamma_\ell$ is immersed.
    Because $\Gamma$ is path-cycle extensible, for each $\gamma_\ell$, there exists an immersed cycle $\tau_\ell: C_{m_\ell} \rightarrow \Gamma$, $m_\ell\geq 3$, such that $\tau_\ell\circ \gamma_{2,m_\ell}=\gamma_\ell$. 
    Let $R_{m_\ell}\rightarrow X_t$ be the 2-cell in the thickened graphical complex $X_t$ corresponding to the cycle $C_{m_\ell}\rightarrow \Gamma$. 
    Construct a jasmine diagram $D$ with three petals by taking the disjoint union $\coprod_{\ell=1}^3 R_{m_\ell}$ and identifying $\gamma_{2,m_1}\big((0,1)\big)\sim \gamma_{2,m_3}\big((2,1)\big)$, $\gamma_{2,m_2}\big((0,1)\big)\sim \gamma_{2,m_1}\big((2,1)\big)$, $\gamma_{2,m_3}\big((0,1)\big)\sim \gamma_{2,m_2}\big((2,1)\big)$, as illustrated in Figure \ref{fig:Disk diagram with three petals}.
    \begin{figure}
        \centering
        \includegraphics[width=0.25\linewidth]{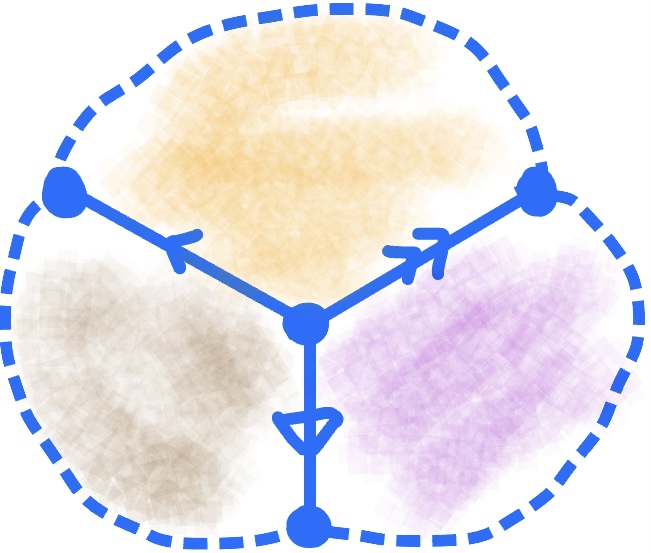}
        \caption{Disk diagram with three petals in $X_t$}
        \label{fig:Disk diagram with three petals}
    \end{figure}
    The inclusions $R_{m_\ell}\rightarrow X_t$ induce a map $D\rightarrow X_t$.
    Because the cycle in $Lk(v,X)$ is immersed, the edges $(v',w')$ and $(v'',w'')$ are distinct, so $\vec{e}_2$ is a piece. 
    Similarly, $\vec{e}_4$ and $\vec{e}_6$ are also pieces. 
    It follows that $D\rightarrow X_t$ is a reduced disk diagram. 
    However, $D$ contains an interior vertex of degree 3, contradicting the assumption that $\langle f:\Gamma \rightarrow \Theta\rangle$ is a $T(q)$ small cancellation presentation with $q\geq 4$. 
    Thus, no immersed cycle of length 6 can occur in $Lk(v,X)$. 
    The general case follows by an analogous argument.

    Finally, we show that no immersed 4-cycle can occur in $Lk(v,X)$ when $q\geq 5$. If such a 4-cycle existed, traversing it twice would produce an immersed 8-cycle. Since $8<2q$, this contradicts the conclusion of the previous step. Therefore, no immersed 4-cycle exists in this case.  
\end{proof}

The converse of Lemma~\ref{lem:T condition and the links} is not true in general. For example, let $\Theta$ be the wheel graph with four vertices: begin with a triangle whose vertices are labeled $u_1, u_2$, $u_3$, then add a central vertex $u_4$ and edges joining $u_4$ to each $u_i$ for $1\leq i\leq 3$. Let $\Gamma_1$ be the subgraph of $\Theta$ obtained by removing the edge $\Set{u_1,u_2}$, and let $\Gamma_2$ be the subgraph obtained by removing the edge $\Set{u_2,u_3}$. Set $\Gamma = \Gamma_1\coprod \Gamma_2$; see Figure~\ref{fig:A non T(4) graphical presentation}.
\begin{figure}
    \centering
    \includegraphics[width=0.7\linewidth]{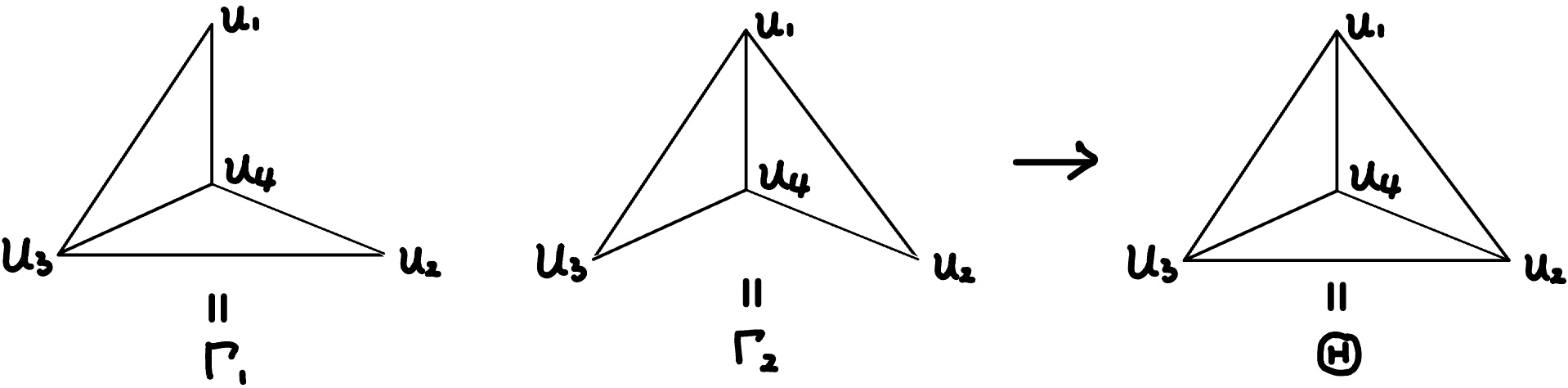}
    \caption{A graphical presentation that is not $T(4)$}
    \label{fig:A non T(4) graphical presentation}
\end{figure}
Let $f: \Gamma \rightarrow \Theta$ be the canonical map, and let $X$ be the associated non-thickened graphical complex. 
Observe that every immersed cycle in $Lk(u_i,X)$ has length a multiple of $4$; in particular, every such cycle has length either $4$ or at least $8$. 
However, by filling the triangles $u_1u_2u_4$, $u_2u_3u_4$, and $u_1u_3u_4$, one obtains a reduced disk diagram in the associated thickened graphical complex $X_t$. This diagram contains an interior vertex of degree 3, and hence $X_t$ does not satisfy the $T(4)$ condition. 

\section{Proof of the results}
\label{sec: Proof of the results}

In this section, we equip a non-thickened graphical complex with a metric of piecewise constant curvature and prove Theorem \ref{thm: simplicial + T(6) implies CAT(0)}. We also establish a slightly stronger statement: instead of assuming the girth conditions in Theorem \ref{thm: simplicial + T(6) implies CAT(0)}, we assume the corresponding $C(p)$ small cancellation condition. This result is stated as Corollary \ref{cor: C(3) + T(6) implies CAT(0)}. Finally, we explain how the remaining corollaries follow from the nonpositive curvature structures constructed here together with existing results in the literature. 

We begin by recalling how to endow a 2-complex with a piecewise Euclidean (or hyperbolic) metric, and the link condition for such complexes, which characterizes when such a metric is locally CAT(0) (respectively, CAT(-1)). For further details, see \cite{BH13}.

Let $X$ be a 2-complex with a cell decomposition $\coprod_\lambda e^n_\lambda$. 
For each $n$-cell $e^n_\lambda$, there is a canonical map from an $n$-dimensional polytope to $X$ whose image is $e^n_\lambda$. In particular, $X$ can be realized as a quotient of the disjoint union of these polytopes. 
Equip each cell $e^n_\lambda$ with a Euclidean (or hyperbolic) metric by embedding its corresponding polytope into $\mathbb{E}^2$ (respectively, $\mathbb{H}^2$) so that its image is the convex hull of the images of its vertices. 
Let $\varphi^n_\lambda$ denote the map from the polytope in $\mathbb{E}^2$ (or $\mathbb{H}^2$) to $X$. 
Assume the following compatibility condition: whenever an $m$-cell $e^m_\eta$ is a face of an $n$-cell $e^n_\lambda$, the map $(\varphi^n_\lambda)^{-1}\circ\varphi_\eta^m$ is an isometric embedding. 
Under this assumption, the cell-wise metrics induce a pseudometric $d$ on $X$. 
Moreover, if only finitely many isometric types of cells occur in $X$, then $d$ is a genuine metric, and $(X,d)$ is a complete metric space. 

With a piecewise Euclidean (or hyperbolic) metric $d$ as described above, the \textbf{geometric link} of a vertex $v$ in $X$ is the link graph $Lk(v,X)$ where each edge is assigned the metric length equal to the angle of the corresponding corner of the 2-cell (viewed in $\mathbb{E}^2$ or $\mathbb{H}^2$). 
The complex $X$ is said to satisfy the \textbf{link condition} if, for every vertex $v\in X$, every injective cycle in $Lk(v,X)$ has metric length at least $2\pi$. 
It is known that $(X,d)$ is locally CAT(0) (respectively, CAT(-1)) if and only if it satisfies the link condition; see \cite[Lemma 5.6]{BH13}.  

\begin{proof}[Proof of Theorem \ref{thm: simplicial + T(6) implies CAT(0)}]\label{proof: main theorem}
    Let $\langle f:\Gamma \rightarrow \Theta\rangle$ be a $T(q)$ graphical presentation with $Girth(\Gamma)\geq p$, and let $X$ be the associated non-thickened graphical complex. 
    Recall that $X = \coprod_\lambda R^\lambda_3 / \sim$ can be realized as a quotient of a disjoint union of triangles. 
    Following the notation introduced in the paragraph preceding Lemma \ref{lem:T condition and the links}, for each $\lambda$, each triangle $R^\lambda_3$ has exactly two edges mapped to $E_{tip}(X)$ and one edge mapped to $E(\Theta)$. 
    For every $v\in V_{tip}(X)$, the link $Lk(v,X)$ is isomorphic to the component of $\Gamma$ corresponding to $v$.
    Hence, $Girth(\Gamma)\geq p$ implies $Girth\big(Lk(v,X)\big)\geq p$. 
    For $v\in V(\Theta)$, Lemma \ref{lem:T condition and the links} shows that if $\langle f:\Gamma \rightarrow \Theta\rangle$ is a $T(q)$ graphical presentation with $q\geq 5$, then $Girth\big(Lk(v,X)\big)\geq 2q$.
    
    For (1), endow each 2-cell $R_3^\lambda$ with the metric of a Euclidean triangle where the two edges mapped to $E_{tip}(X)$ have length 1, and the edge mapped to $E(\Theta)$ has length $\sqrt{3}$. 
    With this choice, for every $v\in V_{tip}(X)$, each edge in the geometric link $Lk(v,X)$ has metric length $\frac{2\pi}{3}$, while for all $v\in V(\Theta)$, every edge in $Lk(v,X)$ has metric length $\frac{\pi}{6}$.
    Thus, with the induced metric $d$, the complex $(X,d)$ satisfies the link condition and is locally CAT(0). 

    For (2), it suffices to metrize every 2-cell in $X$ as a fixed isosceles geodesic triangle in $\mathbb{H}^2$ whose vertex angle is $\frac{2\pi}{p}$ and whose two base angles are $\frac{\pi}{q}$. Such a triangle exists since $\frac{1}{p}+\frac{1}{q}<\frac{1}{2}$. 
\end{proof}

For a $T(q)$ graphical presentation $\langle f: \Gamma \rightarrow \Theta\rangle$ with $q\geq 5$, Lemma \ref{Lem: pieces have length 1} implies that all pieces in the associated thickened graphical complex $X_t$ have length 1. Thus, for these graphical presentations, the $C(p)$ condition is weaker than requiring $Girth(\Gamma)\geq p$. In the following corollary, we show that the hypothesis $Girth(\Gamma)\geq p$ in Theorem \ref{thm: simplicial + T(6) implies CAT(0)} can be relaxed to the $C(p)$ condition.

\begin{corollary}\label{cor: C(3) + T(6) implies CAT(0)}
    Let $\langle f:\Gamma\rightarrow \Theta\rangle$ be a $C(p)$-$T(q)$ graphical small cancellation presentation, then the associated non-thickened graphical complex $X$ can be endowed with a locally CAT(0) metric when $(p,q)=(3,6)$ and a locally $CAT(-1)$ metric when $(p,q)\in \{(3,7), (4,5)\}$.
\end{corollary}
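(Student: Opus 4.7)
The plan is to reduce to Theorem \ref{thm: simplicial + T(6) implies CAT(0)} by passing to a subdivision of the graphical presentation. Since $q \geq 5$ in all three cases, Lemma \ref{Lem: pieces have length 1} guarantees that every piece in $X_t$ is a single edge of $\Theta$ admitting at least two distinct lifts in $\Gamma$; I call such edges \emph{piece edges} and the remaining edges of $\Theta$ \emph{non-piece edges} (the latter admit at most one lift in $\Gamma$). The construction is to subdivide each non-piece edge of $\Theta$, together with its unique lift in $\Gamma$ when one exists, into $p - 1$ consecutive edges; let $\langle f' : \Gamma' \rightarrow \Theta' \rangle$ denote the resulting graphical presentation and $X'$ its associated non-thickened complex. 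By construction $X'$ is a geometric subdivision of $X$, so the two share the same underlying topological space.

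I would show that $\langle f' : \Gamma' \rightarrow \Theta' \rangle$ is $C(p)$-$T(q)$ and has $Girth(\Gamma') \geq p$, so that Theorem \ref{thm: simplicial + T(6) implies CAT(0)} applied to the new presentation equips $X' = X$ with the desired locally CAT(0) (for $(p,q) = (3,6)$) or CAT(-1) (for $(p,q) \in \{(3,7), (4,5)\}$) metric. For the girth bound, an immersed cycle $\tau : C_n \rightarrow \Gamma$ whose edges map to $j$ piece edges and $n - j$ non-piece edges has subdivided length $j + (p - 1)(n - j)$ in $\Gamma'$. When $j = n$, the image is a concatenation of $n$ pieces and $C(p)$ forces $n \geq p$; when $j < n$, the length is at least $(n - 1) + (p - 1) = n + p - 2 \geq p$ since $n \geq 2$.

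The crucial observation for preserving the small cancellation data is that halves of subdivided non-piece edges still have unique lifts in $\Gamma'$, so they do not become length-one pieces in $\Theta'$; local injectivity of $f'$ together with Lemma \ref{Lem: pieces have length 1} applied to the original presentation then rules out any new pieces of length $\geq 2$, so the set of pieces in $\Theta'$ coincides with the set of old piece edges and $C(p)$ is inherited. For $T(q)$, any reduced disk diagram in $X'_t$ consolidates (by erasing the midpoints along each subdivided edge) to a reduced disk diagram in $X_t$ with identical degrees at old interior vertices; the newly introduced midpoints appear only as interior vertices of degree exactly two, safely outside the forbidden range $(2, q)$. The main technical point is this last verification---that the subdivision preserves the piece structure and the $T(q)$ condition exactly---after which Theorem \ref{thm: simplicial + T(6) implies CAT(0)} produces the required metric on $X' = X$ with no further work.
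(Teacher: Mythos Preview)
Your approach is essentially the paper's: subdivide the edges of $\Theta$ (and their lifts in $\Gamma$) that are not pieces so as to force $Girth(\Gamma')\geq p$, and then use the metric construction from Theorem~\ref{thm: simplicial + T(6) implies CAT(0)}. Two points deserve comment.

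First, a small gap: you subdivide each non-piece edge into $p-1$ edges and then invoke ``$n\geq 2$'' to obtain $n+p-2\geq p$. Nothing in the setup rules out loops in $\Gamma$, i.e.\ immersed $1$-cycles; if such a loop exists, $C(p)$ forces its edge to be a non-piece edge, and your subdivision turns it into a $(p-1)$-cycle in $\Gamma'$, so $Girth(\Gamma')=p-1<p$. The paper subdivides into $p$ edges, which handles all $n\geq 1$. Second, rather than re-establishing $C(p)$--$T(q)$ for $\langle f':\Gamma'\to\Theta'\rangle$ and then invoking Theorem~\ref{thm: simplicial + T(6) implies CAT(0)} as a black box, the paper verifies the link condition for $X'$ directly: for an original vertex $v\in V(\Theta)$ one has $Lk(v,X')\cong Lk(v,X)$, so Lemma~\ref{lem:T condition and the links} applied to the \emph{original} presentation already gives girth $\geq 2q$; and a subdivision vertex has a path of length~$2$ as its link, hence infinite girth. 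This bypasses your consolidation step entirely, and in particular the assertion that midpoints in a reduced diagram over $X'_t$ have interior degree exactly~$2$ --- plausible, but it needs its own short argument (after reducing to the case of two adjacent $2$-cells sharing a non-piece sub-edge, one still has to exclude a single $2$-cell bordering itself there).
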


\begin{proof}\label{proof: C(p)-T(q) implies CAT(0)}

    To begin, we describe subdivisions of $\Gamma$ and $\Theta$, such that the subdivision of $\Gamma$ has girth at least $p$. 
    Define 
    \begin{align*}
        E_1(\Theta) &\coloneqq \{e\in E(\Theta): f^{-1}(e) \ \text{is a single edge}\} \\
        E_1(\Gamma) &\coloneqq \{e\in E(\Gamma): f(e)\in E_1(\Theta)\}.
    \end{align*}
    Subdivide every edge in $E_1(\Theta)$ and in $E_1(\Gamma)$ into $p$ edges, and denote the resulting subdivisions of $\Gamma$ and $\Theta$ by $\Gamma'$ and $\Theta'$, respectively. 
    There is a canonical graph immersion $f': \Gamma'\rightarrow \Theta'$ induced by $f$, and the non-thickened graphical complex $X'$ associated to $\langle f':\Gamma'\rightarrow \Theta'\rangle$ is a subdivision of $X$. 
    Equip $X'$ with a piecewise Euclidean (or hyperbolic) metric $d$ as in the proof of Theorem \ref{thm: simplicial + T(6) implies CAT(0)}, so that every corner at a cone tip $v\in V_{tip}(X')$ has angle $\frac{2\pi}{p}$, and every other corner has angle $\frac{\pi}{q}$. 
    To show that $d$ is locally CAT(0) (respectively, CAT(-1)), it suffices to verify that $Girth(Lk(v,X'))\geq p$ for all $v\in V_{tip}(X')$, and $Girth(Lk(v,X'))\geq 2q$ for all $v\in V(\Theta')$.

    We first show that $Girth(\Gamma')\geq p$, which implies $Girth(Lk(v,X'))\geq p$ for every $v\in V_{tip}(X')$. 
    It suffices to show that every injective $k$-cycle in $\Gamma$ with $1\leq k<p$ contains at least one edge in $E_1(\Gamma)$. 
    Observe that for $e\in E(\Gamma)$, either $e\in E_1(\Gamma)$, or the image $f(e)$ can be parametrized as a piece of length 1. 
    Since $\langle f: \Gamma \rightarrow \Theta\rangle$ is a $C(p)$ graphical presentation, every immersed cycle $\tau: C\rightarrow \Gamma$ with $|C|< p$ must contain an edge in $E_1(\Gamma)$. Thus $Girth(\Gamma')\geq p$.

    Next, we show $Girth(Lk(v,X'))\geq 2q$ for every $v\in V(\Theta')$. Notice that each vertex $v\in V(\Theta')$ is either an original vertex in $\Theta$ or a subdivision vertex lying on an edge in $E_1(\Theta)$. 
    If $v$ is a subdivision vertex, then it lies on an edge in $E_1(\Theta)$, and $Lk(v,X')$ is a path of length 2. Hence $Girth(Lk(v,X'))=\infty$. 
    If $v\in V(\Theta)$, then $Lk(v,X')$ is isomorphic to $Lk(v,X)$, and by Lemma \ref{lem:T condition and the links}, this link has girth at least $2q$.

    Therefore, $X'$ satisfies the link condition, and the metric $d$ on $X$ is locally CAT(0) (respectively, CAT(-1)). Since $X'$ is a subdivision of $X$, the same metric $d$ descends to $X$ as well.
    
\end{proof}

Let $\widetilde{X}$ be a simply connected $C(p)$-$T(q)$ non-thickened graphical complex with $(p,q)\in \{(3,6),(4,5)\}$. As in the proof of Corollary \ref{cor: C(3) + T(6) implies CAT(0)}, we may subdivide $\widetilde{X}$ into a triangle complex $\widetilde{X}'$. 
When $(p,q)=(3,6)$, each 2-cell in $\widetilde{X}'$ can be metrized as a Euclidean triangle with angles $\frac{2\pi}{3}$, $\frac{\pi}{6}$, $\frac{\pi}{6}$.
When $(p,q)=(4,5)$, instead of giving every 2-cell a hyperbolic metric, we may again assign to each 2-cell a Euclidean triangle, this time with angles $\frac{\pi}{2}$, $\frac{\pi}{4}$, $\frac{\pi}{4}$. 
In both cases, the induced metric on $\widetilde{X}'$ is CAT(0), recurrent (see \cite[Section 2]{OP21-recurrent}, from Definition 2.1 to Example 2.5), and has rational angles (see the paragraph preceding Theorem 1.1 in \cite{NOP22-torsion}). 
Therefore, Corollary \ref{cor:Tits alternative} follows from either \cite[Main Theorem]{OP21-recurrent} or \cite[Theorem A]{OP22-Tits}, and Corollary \ref{cor:fixed point} follows from \cite[Theorem 1.1]{NOP22-torsion}.

\section*{Acknowledgments}
I would like to thank Damian Osajda for suggesting this project, many insightful discussions, and comments on earlier drafts. I am also grateful to Martín Blufstein for valuable discussions and assistance with proofreading, and to Tim Berland for helpful conversations. The work presented here was partially supported by the Carlsberg Foundation, grant CF23-1226.

\bibliographystyle{alpha}
\bibliography{ref}

\end{document}